\newtheorem{theo}{Theorem}
\newtheorem{lem}{Lemma}
\renewenvironment{proof}{\noindent {\bfseries Proof.}}{\hfill $\blacksquare$\vspace{0.4cm}}
\begin{document}

\onehalfspacing

\begin{center}
{\Large{\bf Traveling wave solutions of the Burgers-Huxley equations}}
\end{center}

\bigskip

\begin{center}
{\large Luis Fernando Mello$^\ast$ and Ronisio Moises Ribeiro}\\
{\em Instituto de Matem\'atica e Computa\c c\~ao, Universidade Federal de
Itajub\'a,\\ Avenida BPS 1303, Pinheirinho, CEP 37.500-903, Itajub\'a,
MG, Brazil.\\}
e-mail: lfmelo@unifei.edu.br, roniribeiro@unifei.edu.br
\end{center}

\bigskip

\begin{center}
{\bf Abstract}
\end{center}

We study the traveling wave solutions of the Burgers-Huxley equation from a geometric point of view via the qualitative theory of ordinary differential equations. By using the Poincar\'e compactification we study the global phase portraits of a family of polynomial ordinary differential equations in the plane related to the Burgers-Huxley equation. We obtain the traveling wave solutions and their asymptotic behaviors from the orbits that connect equilibrium points taking into account the restrictions of the studied equation.

\bigskip

\noindent {\small {\bf Key-words}: Traveling wave solution, Burgers-Huxley equation, compactification, global phase portrait}

\noindent {\small {\bf 2020 Mathematics Subject Classification:} 34A34, 34A26, 34D23.}

\noindent \makebox [40mm]{\hrulefill}

\noindent {\footnotesize $^\ast$Corresponding author}.


\section{Introduction}\label{sec:1}

We study the traveling wave solutions of the nonlinear parabolic partial differential equation
\begin{equation}\label{eq1}
w_t + w^k w_z = w_{zz} + w^m (1 - w^n),
\end{equation}
where $w = w (z, t)$, $(z, t) \in \mathbb R \times (0, T)$, $T > 0$, and the parameters $k$, $m$, $n$ are positive integers. Equation \eqref{eq1} can be found in \cite{Acho} and there it is called the Burgers-Huxley equation. There are other nonlinear parabolic partial differential equations that receive this same name or sometimes the name generalized Burgers-Huxley equation. See \cite{KM, MPHLN} and references therein. Equation \eqref{eq1} is used to model the dynamics of various physical phenomena associated with reaction mechanisms. For instance, it describes the dynamics of electric pulses in nerve fibers and in the walls of liquid crystals \cite{Wang}, interactions between diffusion transport and convection or advection effects \cite{AF}.

In order to study solutions of \eqref{eq1}, consider the following change of variables
\begin{equation}\label{eq:n1}
\phi(\xi) = w(z, t),
\end{equation}
where $\xi = z - c\,t$ and $c>0$ is the velocity of the wave. If $\phi$ is a solution of \eqref{eq1} then it must satisfy the following second-order ordinary differential equation
\begin{equation}\label{eq:n2}
-c \phi' + \phi^k \phi' = \phi'' + \phi^m (1 - \phi^n),
\end{equation}
where the dash means derivative with respect to the variable $\xi$.

Defining the new variable $\psi = \phi'$, we obtain from \eqref{eq:n2} the following systems of two first-order ordinary differential equations
\begin{equation}\label{eq:n3}
\phi' = \psi, \quad \psi' = -c \psi + \phi^k \psi + \phi^m \left( \phi^n - 1 \right).
\end{equation}

From the point of view of ordinary differential equations, systems \eqref{eq:n3} form a four-parameter family of polynomial differential equations in the plane. In this context, it is possible to study their global phase portraits and, in particular, to study their equilibrium points, both finite and infinite.

Assume the existence of two equilibrium points (finite or infinite), denoted by $A = (\phi_A, \psi_A)$ and $B = (\phi_B, \psi_B)$. Assume further the existence of an initial condition $(\phi_0, \psi_0)$ such that the solution $(\phi(\xi), \psi(\xi))$ of \eqref{eq:n3} by $(\phi_0, \psi_0)$, defined in its maximal interval $(\xi_a, \xi_b) \subset \mathbb R$, satisfies
\[
\lim_{\xi \to \xi_a} (\phi(\xi), \psi(\xi)) = (\phi_A, \psi_A), \quad \lim_{\xi \to \xi_b} (\phi(\xi), \psi(\xi)) = (\phi_B, \psi_B).
\]
In this scenario, the first coordinate of the previous solution, $\phi(\xi)$, respecting the restrictions that the solutions of \eqref{eq1} may have, is called the traveling wave solution of equation \eqref{eq1}. See equation \eqref{eq:n1}. Thus, there is a close relationship between the traveling wave solutions of \eqref{eq1} and the solutions of \eqref{eq:n3} that tend to equilibrium points. See \cite{IS1}. However, it may happen that a solution of \eqref{eq:n3} that tends to equilibrium points does not generate a traveling wave solution of \eqref{eq1} due to restrictions imposed on their solutions. The most common of these restrictions is the boundedness of the solutions.

For simplicity and so that the size of this article is not too large, we assume $m=1$ and $n\in\{1,2\}$. The cases $m>1$ and $n>2$ are more complicated and will be studied in a future article. Defining $(x,y) = (\phi, \psi)$ and $s = \xi$, systems \eqref{eq:n3} are written as
\begin{equation}\label{maineq}
\dot{x} = y, \quad \dot{y} = -cy+x^ky+x \left( x^n-1 \right),
\end{equation}
where the dot means derivative means derivative with respect to the variable $s$.

Under the assumptions considered here, systems \eqref{maineq} are a three-parameter family of polynomial ordinary differential equations in the plane: $(x, y) \in \mathbb R^2$, $n\in\{1,2\}$, $k \in \mathbb N$ and $c > 0$. In this sense, instead of studying only the traveling wave solutions of \eqref{eq1}, we will conduct a more comprehensive study: we will study the global phase portraits of \eqref{maineq} and, within this study, we will highlight the traveling wave solutions of \eqref{eq1}. We believe that in this way the geometric aspects of the traveling wave solutions will be better understood.

The next theorem provides a characterization of the global phase portraits of systems \eqref{maineq}. In the statement of the theorem the word ``distinct'' means ``not topologically equivalent''.

\begin{theo}\label{thm:01}
Systems \eqref{maineq} have seven distinct phase portraits described below:
\begin{description}
\item[I.] Phase portrait $(i)$ of Figure \ref{gfp} if $n=1$, $k\geq1$ is odd and $0<c<1$;

\item[II.] Phase portrait $(ii)$ of Figure \ref{gfp} if $n=1$, $k\geq1$ is odd and $c=1$;

\item[III.1.] Phase portrait $(iii-1)$ of Figure \ref{gfp} if $n=1$, $k\geq1$ is odd and $1< c < 2$;

\item[III.2.] Phase portrait $(iii-2)$ of Figure \ref{gfp} if $n=1$, $k\geq1$ is odd and $c \geq 2$;

\item[IV.1.] Phase portrait $(iv-1)$ of Figure \ref{gfp} if $n=1$, $k\geq1$ is even and $0<c<2$;

\item[IV.2.] Phase portrait $(iv-2)$ of Figure \ref{gfp} if $n=1$, $k\geq1$ is even and $c \geq 2$;

\item[V.1.] Phase portrait $(v-1)$ of Figure \ref{gfp} if $n=2$, $k=1$ and $0<c<2$;

\item[V.2.] Phase portrait $(v-2)$ of Figure \ref{gfp} if $n=2$, $k=1$ and $c \geq 2$;

\item[VI.1.] Phase portrait $(vi-1)$ of Figure \ref{gfp} if $n=2$, $k>1$ is even and $0<c<2$;

\item[VI.2.] Phase portrait $(vi-2)$ of Figure \ref{gfp} if $n=2$, $k>1$ is even and $c \geq 2$;

\item[VII.1] Phase portrait $(vii-1)$ of Figure \ref{gfp} if $n=2$, $k>1$ is odd and $0<c<2$;

\item[VII.2] Phase portrait $(vii-2)$ of Figure \ref{gfp} if $n=2$, $k>1$ is odd and $c \geq 2$.
\end{description}
\end{theo}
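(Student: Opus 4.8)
The plan is to analyze systems \eqref{maineq} via the Poincar\'e compactification, treating the twelve parameter regimes in the statement and showing that they collapse to seven topological equivalence classes. First I would locate the finite equilibria: since $\dot x = y$, every equilibrium lies on $y=0$, and the $x$-coordinates solve $x(x^n-1)=0$. For $n=1$ this gives $x=0$ and $x=1$; for $n=2$ it gives $x=0$, $x=1$ and $x=-1$. At each equilibrium I would compute the Jacobian
\[
J=\begin{pmatrix} 0 & 1 \\ (n+1)x^n + kx^{k-1}y - 1 & -c + x^k \end{pmatrix},
\]
evaluate it on $y=0$, and classify the point from the trace and determinant. At $x=0$ one gets determinant $1>0$ and trace $-c<0$, so the origin is always a stable node or focus, with the threshold $c=2$ (coming from the discriminant $c^2-4$) separating focus from node — this is exactly where the split between the ``$0<c<2$'' and ``$c\ge 2$'' cases originates. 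At $x=1$ the determinant is $-n<0$, so $(1,0)$ is always a saddle. For $n=2$ the point $(-1,0)$ has determinant $n=2>0$ and trace $-c-(-1)^k$; here the parity of $k$ matters, because $(-1)^k$ flips sign, which explains why the $n=2$ cases branch according to $k=1$, $k$ even, $k$ odd. The case $c=1$, $n=1$ deserves separate attention because $x^k-c$ vanishes at $x=1$ only when $k$... actually at $x=1$ the trace is $1-c$, which vanishes precisely at $c=1$; but since the determinant there is negative the point stays a saddle, so the $c=1$ distinction must come from a global feature (a heteroclinic connection appearing), not a local bifurcation — I would track the stable/unstable separatrices of the saddle $(1,0)$ and show that at $c=1$ they connect to infinity or to the origin in a way that changes the portrait.

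Next I would study the equilibria at infinity. Writing the compactified system in the two standard charts $U_1$ (coordinates $(u,v)=(y/x,1/x)$) and $U_2$ (coordinates $(u,v)=(x/y,1/y)$), I would multiply through by the appropriate power of $v$ to desingularize. The highest-degree terms of \eqref{maineq} are $x^k y$ (from $\dot y$) and, when $n\ge k$, also $x^{n+1}$; so the degree is $d=\max\{k+1,n+1\}$, and since $n\le 2$ the dominant behavior at infinity is governed by $x^k y$ for $k\ge 1$ (with ties handled carefully for small $k$). I expect the infinite equilibria to sit at the endpoints of the $x$- and $y$-axes on the Poincar\'e circle, and their hyperbolic/semi-hyperbolic nature to depend on the parity of $k$ (since $x^k$ changes sign with $x$ when $k$ is odd but not when $k$ is even) — this is the source of the odd/even-$k$ dichotomy in cases I–IV and VI–VII. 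For the semi-hyperbolic or nilpotent infinite points I would apply the standard normal-form theorems (Theorems in Dumortier–Llibre–Art\'es, say Chapter 2 and Chapter 3) to determine whether they are saddles, nodes, or saddle-nodes, and I would use blow-ups if any point turns out to be linearly zero.

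With all equilibria (finite and infinite) classified, I would assemble the global phase portrait in each regime by (a) using the Poincar\'e–Bendixson theorem and the absence of periodic orbits — which I would justify by a Bendixson–Dulac argument, noting $\partial \dot x/\partial x + \partial \dot y /\partial y = -c+x^k$ which has one sign on suitable regions, or by the fact that any periodic orbit must surround a focus but the only focus is the origin and the divergence there is $-c<0$, ruling out a limit cycle around it for small $c$ — and (b) pinning down the separatrix connections. Step (b) is the main obstacle: the topological type of the portrait is decided by \emph{which} equilibrium each branch of the saddle separatrices limits to, and this is a genuinely global question not settled by local analysis. I would handle it by constructing invariant regions (e.g. strips $a\le x\le b$ bounded by the nullclines $y=0$ and by the curve $\psi'=0$) that trap the relevant separatrix, by exploiting the symmetry $(x,y,s)\mapsto(x,-y,-s)$ of the vector field restricted to... (the system has no such symmetry in general, but time-reversal $s\to -s$ flips $c\to -c$, which I can use to relate the phase portrait's past and future behavior), and by a monotonicity argument showing $\phi(\xi)$ moves steadily between the equilibrium values. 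Once the connections are fixed I would verify that two regimes give topologically equivalent portraits exactly when the statement claims, by exhibiting the homeomorphism of the Poincar\'e disk on the level of separatrix configurations, and conversely distinguish non-equivalent ones by an invariant such as the number of separatrices or the index/type of an infinite equilibrium. Finally I would record, for each portrait, the orbits connecting two equilibria and check the boundedness restriction from the introduction, thereby extracting the traveling wave solutions; but that last bookkeeping is deferred to the sections following this theorem.
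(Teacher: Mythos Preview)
Your overall strategy matches the paper's: classify the finite equilibria, study the infinite equilibria via the Poincar\'e compactification (with blow-ups for the degenerate points), rule out closed orbits by a Bendixson argument on the divergence $x^k-c$, and then determine the separatrix configurations case by case, appealing to the Markus--Neumann--Peixoto theorem to conclude.

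There is, however, a concrete error in your local analysis of $E_2=(-1,0)$ for $n=2$. The Jacobian there is
\[
J(-1,0)=\begin{pmatrix} 0 & 1 \\ n & (-1)^k - c \end{pmatrix},
\]
whose determinant is $-n<0$, not $+n>0$; so $E_2$ is always a hyperbolic saddle, just like $E_1$, regardless of the parity of $k$. The splitting of the $n=2$ phase portraits into the subcases $k=1$, $k>1$ even, and $k>1$ odd therefore does \emph{not} come from $E_2$; it comes entirely from the behaviour at infinity (the paper's Lemma~\ref{l2}). For $k=1$ the cubic term $x^{n+1}=x^3$ dominates (degree $3>k+1=2$), the chart $U_1$ carries no equilibrium at all, and the single pair of infinite equilibria in $U_2,V_2$ has two elliptic and two parabolic sectors; for $k>1$ the term $x^k y$ dominates, the chart $U_1$ does carry an equilibrium, and the parity of $k$ controls how the sectors at the degenerate infinite points are arranged after blow-up. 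Correcting this redirects your analysis of cases V--VII to the right place.

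One further caveat you should anticipate: since the divergence $x^k-c$ changes sign, a Bendixson argument only excludes closed orbits lying entirely in a region where it has a definite sign. The paper makes this work for $c\ge 1$ (its Lemma~\ref{lemma:0}), combining the sign condition with an inspection of the flow across the boundary $x=c^{1/k}$, but it explicitly acknowledges that it cannot rule out limit cycles for $0<c<1$ and proceeds under the assumption---supported only numerically---that none exist. Your proposal should flag the same gap in Case~I rather than claim a full Bendixson--Dulac proof there.
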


\begin{figure}[h!]
	\begin{center}
		\begin{overpic}[width=6.4in]{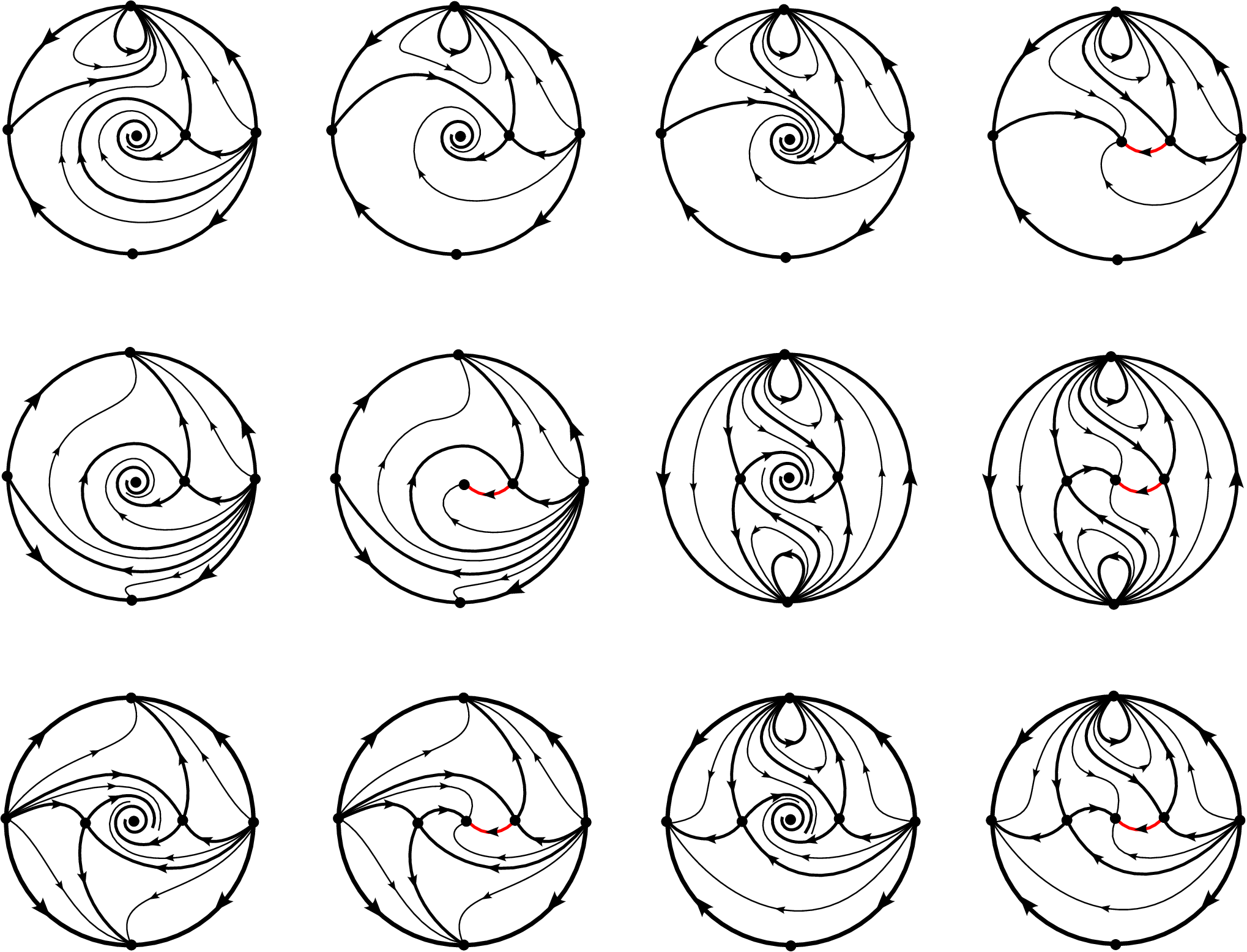}
						\put(9,52) {$(i)$}
						\put(35,52) {$(ii)$}
						\put(58,52) {$(iii-1)$}
						\put(85,52) {$(iii-2)$}
						\put(6,25) {$(iv-1)$}
						\put(32,25) {$(iv-2)$}
						\put(59,25) {$(v-1)$}
						\put(86,25) {$(v-2)$}
						\put(6,-3) {$(vi-1)$}
						\put(32,-3) {$(vi-2)$}
						\put(58,-3) {$(vii-1)$}
						\put(84,-3) {$(vii-2)$}
		\end{overpic}
	\end{center}
	\caption{Global phase portraits of systems \eqref{maineq}. See Theorem \ref{thm:01}. The orbits in red give rise to the traveling wave solutions of \eqref{eq1}. See Theorem \ref{thm:traveling}.}
\label{gfp}
\end{figure}

It is worth mentioning that each of the phase portraits $(iii-1)$, $(iv-1)$, $(v-1)$, $(vi-1)$ and $(vii-1)$ of Figure \ref{gfp} is topologically equivalent to the respective phase portrait $(iii-2)$, $(iv-2)$, $(v-2)$, $(vi-2)$ and $(vii-2)$ of Figure \ref{gfp}. In the phase portraits $(iii-2)$, $(iv-2)$, $(v-2)$, $(vi-2)$ and $(vii-2)$ of Figure \ref{gfp} are illustrated in red the orbits that give rise to the traveling wave solutions of \eqref{eq1}, according to the following theorem.

\begin{theo}\label{thm:traveling}
Consider the partial differential equation \eqref{eq1} under the assumptions: $m = 1$, $n\in\{1,2\}$, $k \in \mathbb N$ and $z \in [0, 1]$. Then, for any given $c \geq 2$, $n\in\{1,2\}$, and $k \in \mathbb N$, equation \eqref{eq1} has a traveling wave solution $\phi(\xi)$ corresponding to the solution of \eqref{eq:n3} connecting the equilibrium points $(1,0)$ and $(0,0)$. See the orbits drawn in red in Figure \ref{gfp}. The solution $\phi(\xi)$ satisfies:
\[
\lim_{\xi \to - \infty} \phi(\xi) = 1, \quad \lim_{\xi \to + \infty} \phi(\xi) = 0, \quad \lim_{\xi \to - \infty} \phi'(\xi) = \lim_{\xi \to + \infty} \phi'(\xi) = 0.
\]
\end{theo}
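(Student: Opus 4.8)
The plan is to deduce Theorem~\ref{thm:traveling} directly from the structure of the relevant global phase portraits already established in Theorem~\ref{thm:01}. The key observation is that for $c \geq 2$ the phase portraits of interest are $(iii\text{-}2)$ when $n=1$ and $k$ odd, $(iv\text{-}2)$ when $n=1$ and $k$ even, $(v\text{-}2)$ when $n=2$ and $k=1$, $(vi\text{-}2)$ when $n=2$ and $k>1$ even, and $(vii\text{-}2)$ when $n=2$ and $k>1$ odd; in every one of these portraits there is, by construction, a heteroclinic orbit in the finite plane joining the equilibrium $(1,0)$ to the equilibrium $(0,0)$, and this is the orbit drawn in red in Figure~\ref{gfp}. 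So the first step is simply to record that such an orbit exists and to fix an initial condition $(\phi_0,\psi_0)$ on it; the solution $(\phi(\xi),\psi(\xi))$ of \eqref{eq:n3} through that point has $\alpha$-limit set $\{(1,0)\}$ and $\omega$-limit set $\{(0,0)\}$.

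The second step is to verify that this orbit is defined on all of $\mathbb{R}$, i.e. that $(\xi_a,\xi_b) = (-\infty,+\infty)$. Since the orbit lies on a heteroclinic connection between two hyperbolic (or at worst semi-hyperbolic) equilibria, it is bounded, and a bounded orbit of a polynomial (hence locally Lipschitz) planar vector field cannot escape to infinity or blow up in finite time; therefore it extends to the whole real line. Consequently $\lim_{\xi\to-\infty}(\phi(\xi),\psi(\xi)) = (1,0)$ and $\lim_{\xi\to+\infty}(\phi(\xi),\psi(\xi)) = (0,0)$, which, reading off the two coordinates and recalling $\psi = \phi'$, gives precisely
\[
\lim_{\xi\to-\infty}\phi(\xi) = 1,\qquad \lim_{\xi\to+\infty}\phi(\xi) = 0,\qquad \lim_{\xi\to\pm\infty}\phi'(\xi) = 0.
\]

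The third step is to check that $\phi(\xi)$ genuinely qualifies as a traveling wave solution of \eqref{eq1} under the stated restriction $z\in[0,1]$, equivalently $w\in[0,1]$. Here one uses that $(0,0)$ and $(1,0)$ are equilibria of \eqref{eq:n3} and that, in the portraits $(iii\text{-}2)$--$(vii\text{-}2)$, the red orbit stays in the strip $0 \le \phi \le 1$: the segment of the $\phi$-axis between $(0,0)$ and $(1,0)$ together with the red orbit bounds a region that is positively or negatively invariant, and a direct sign analysis of $\dot\phi = \psi$ and of $\dot\psi$ on the lines $\phi=0$ and $\phi=1$ shows the orbit cannot cross them. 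Monotonicity of $\phi$ along the orbit (since $\psi$ does not change sign on it) then confirms $\phi(\xi)\in[0,1]$ for all $\xi$, so the restriction is respected and $\phi$ is an admissible traveling wave.

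The main obstacle is really just the careful justification of the existence and location of the heteroclinic orbit connecting $(1,0)$ to $(0,0)$ in each of the five relevant portraits — but this is exactly the content of Theorem~\ref{thm:01}, whose proof (carried out elsewhere in the paper via Poincar\'e compactification and analysis of the finite and infinite equilibria) already supplies it. Thus, modulo invoking Theorem~\ref{thm:01}, the proof of Theorem~\ref{thm:traveling} reduces to the elementary invariance and extension-of-solutions arguments sketched above; no new hard analysis is needed, only a clean transcription of the geometric picture into the limit statements for $\phi$ and $\phi'$.
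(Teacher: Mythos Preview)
Your overall strategy matches the paper's: both arguments ultimately rest on the global phase portraits of Theorem~\ref{thm:01} to supply a heteroclinic orbit from $E_1=(1,0)$ to $E_0=(0,0)$, and both then read off the limit behaviour of $\phi$ and $\phi'$. Your explicit observation that a bounded orbit of a polynomial field is defined on all of $\mathbb R$ is a nice addition that the paper leaves implicit.

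Where the two part ways is in justifying the constraint $0<\phi(\xi)<1$. The paper does this via a direct linear analysis: it computes the eigenvalues and eigenvectors of the Jacobian at $E_0$ and $E_1$, checks that for $c\ge 2$ the unstable eigendirection at $E_1$ has positive slope (so the relevant branch of the unstable manifold enters $\{0<x<1,\ y<0\}$) and that both eigendirections at the stable \emph{node} $E_0$ have negative slope (so orbits approach $E_0$ from the fourth quadrant without spiralling). This is exactly where the hypothesis $c\ge 2$ enters---below that threshold $E_0$ is a focus, the unstable manifold of $E_1$ winds around it, and $\phi$ crosses $0$.

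Your invariance argument in the third step does not quite do this job. The vertical lines $\phi=0$ and $\phi=1$ are not barriers: on $\phi=0$ with $\psi<0$ one has $\dot\phi=\psi<0$, so trajectories cross into $\{\phi<0\}$, and likewise on $\phi=1$. A sign analysis on those lines therefore cannot prevent the orbit from leaving the strip. Your monotonicity observation (on $\{y=0,\ 0<x<1\}$ one has $\dot\psi=x(x^n-1)<0$, so $\psi<0$ along the orbit and $\phi$ is strictly decreasing) is correct and useful, but it only rules out $\phi>1$; to rule out overshoot past $\phi=0$ you still need that $E_0$ is a node rather than a focus and that the approach is tangent to an eigendirection lying in $\{x>0,\ y<0\}$---which is precisely the eigenvector computation the paper carries out. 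Once you insert that piece, your argument and the paper's coincide.
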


The paper is organized as follows. In Section \ref{sec:2} we study the global phase portraits of systems \eqref{maineq} using the Poincar\'e compactification and a lot of blow-ups. The proof of Theorem \ref{thm:01} is given in Section \ref{sec:3} while the proof of Theorem \ref{thm:traveling} is given in Section \ref{sec:4}.

\section{Global phase portraits of \eqref{maineq}}\label{sec:2}

The finite equilibrium points are obtained by setting $\dot{x}=0$ and $\dot{y}=0$ in systems \eqref{maineq}. Thus, if $n$ is even, there are three finite equilibrium points: $E_0=(0,0)$, $E_1=(1,0)$ and $E_2=(-1,0)$. Now, if $n$ is odd, there are two finite equilibrium points: $E_0=(0,0)$ and $E_1=(1,0)$. The Jacobian matrix of the vector fields associated to systems \eqref{maineq} is written as
\begin{eqnarray*}
	J(x,y)=\left(\begin{array}{cc}
		0 & 1 \\
		(1 + n) x^n + k x^{k-1} y - 1\,\,\,\, & x^k-c
	\end{array}\right).
\end{eqnarray*}
So, we have that
\begin{eqnarray*}
	J(0,0)=\left(\begin{array}{cc}
		0 & 1 \\
		-1 & -c
	\end{array}\right),\quad J(1,0)=\left(\begin{array}{cc}
	0 & 1 \\
	n & 1-c
	\end{array}\right)\quad\text{and}\quad J(-1,0)=\left(\begin{array}{cc}
	0 & 1 \\
	n & (-1)^k-c
	\end{array}\right).
\end{eqnarray*}
From the Jacobian matrices, we can conclude that $E_0$ is a hyperbolic stable focus if $0<c<2$ or a hyperbolic stable node if $c\geq2$, $E_1$ and $E_2$ are hyperbolic saddles.

\begin{lem}\label{lemma:0}
If $c \geq 1$, then systems \eqref{maineq} do not have closed orbits.
\end{lem}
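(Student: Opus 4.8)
The plan is to rule out closed orbits using a Dulac-type / divergence argument combined with the Bendixson--Dulac criterion, exploiting the sign structure of the vector field. First I would compute the divergence of the vector field $(\dot x, \dot y) = (y,\, -cy + x^k y + x(x^n-1))$, which is
\[
\operatorname{div} = \frac{\partial}{\partial x}(y) + \frac{\partial}{\partial y}\big(-cy + x^k y + x(x^n-1)\big) = x^k - c.
\]
If this quantity had a fixed sign on all of $\mathbb R^2$ we would be done immediately by Bendixson, but $x^k - c$ changes sign (for $k$ odd it is negative for $x < c^{1/k}$ and positive beyond; for $k$ even it is negative on a bounded interval around the origin). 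So the naive Bendixson criterion fails and one needs a Dulac function or a localization of where a periodic orbit could live.

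The key observation I would use is that any closed orbit $\gamma$ of a planar system of the form $\dot x = y$, $\dot y = f(x,y)$ must surround at least one equilibrium whose index sums to $+1$; here the only such candidate is $E_0 = (0,0)$ (the points $E_1$, $E_2$ are saddles of index $-1$), so a periodic orbit must enclose $E_0$ and, by the standard fact that a closed orbit of $\dot x = y$, $\dot y = \dots$ must cross the $x$-axis, it must cross $y = 0$ at two points, one with $x>0$ and one with $x<0$ — in particular it must contain points with $x$ arbitrarily... no: it must extend to both sides of the line $x = 0$. Now I would integrate the divergence over the region $R$ bounded by $\gamma$: by Green's theorem (Bendixson's argument),
\[
0 = \oint_\gamma (\dot x\, dy - \dot y\, dx) \quad\text{is not quite it; rather}\quad \iint_R \operatorname{div}\, dx\, dy = 0
\]
must hold if $\gamma$ is a closed orbit — wait, that identity $\iint_R \operatorname{div} = 0$ holds because the flux of the vector field through $\gamma$ is zero along an orbit. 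Hmm, that is the condition, and since $\operatorname{div} = x^k - c$ need not be sign-definite, this alone does not give a contradiction. The real work is therefore to find a Dulac function $B(x,y)$ so that $\operatorname{div}(B \dot x, B \dot y)$ has a fixed sign. A natural attempt is $B(x,y) = e^{\alpha x}$ or $B = e^{\beta y}$ or a function of $x$ alone, say $B(x) = e^{h(x)}$, giving $\operatorname{div}(B\dot x, B\dot y) = B\big(h'(x) y + x^k - c\big)$; the $y$-dependence is fatal unless $h' \equiv 0$. Trying $B(y) = e^{\lambda y}$ gives $\operatorname{div} = e^{\lambda y}\big(\lambda(-cy + x^k y + x(x^n-1)) + x^k - c\big)$, still not sign-definite. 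So a single global Dulac function looks hard, and I expect this to be the main obstacle.

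The approach I would actually pursue, and which I expect the authors use, is to bound the region where a periodic orbit can live and then apply Bendixson there. The trick: if $c \ge 1$ and $k$ is odd, then $x^k - c < 0$ for all $x < 1$; on the other hand one can show using the sign of $\dot y$ on suitable curves that any closed orbit surrounding $(0,0)$ must stay in the half-plane $x < 1$ — indeed $E_1 = (1,0)$ is a saddle, its stable/unstable manifolds together with the structure of the flow for $x \ge 1$ (where $\dot y = -cy + x^ky + x(x^n-1)$ with all the $x$-dependent terms pushing trajectories away) should confine any periodic orbit to $\{x < 1\}$. Restricting Bendixson to the simply connected region $\{x < 1\}$, where $\operatorname{div} = x^k - c \le x^k - 1 < 0$ (for $x<1$, $k$ odd; for $k$ even, $x^k - 1 \le 0$ only on $[-1,1]$, so one needs the sharper confinement $|x|<1$ via both saddles $E_1, E_2$), yields no closed orbit inside. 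For the even-$k$, $n=2$ cases the confinement to the strip $-1 < x < 1$ using both saddles $E_{\pm 1}$ is what makes $x^k - c < 0$ throughout. The hard part will be rigorously justifying the confinement of a hypothetical periodic orbit to the strip $\{|x| < 1\}$ (resp. $\{x<1\}$): this requires a careful analysis of the flow on the vertical lines $x = \pm 1$ and of the invariant manifolds of the saddles, showing no orbit can cross into $x \ge 1$ or $x \le -1$ and return — essentially an argument that these lines, or the saddle separatrices, act as barriers. Once that confinement is in hand, Bendixson's criterion on the strip finishes the proof, since $\operatorname{div} = x^k - c$ is strictly negative there whenever $c \ge 1$.
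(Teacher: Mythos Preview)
Your approach is essentially the paper's: compute $\operatorname{div}=x^k-c$, apply Bendixson on a region where this is negative, and then argue that a hypothetical closed orbit cannot leave that region. The only real difference is in the choice of region and the confinement mechanism. The paper works on the larger sets $B_1=\{x<c^{1/k}\}$ (for $k$ odd) and $B_2=\{-c^{1/k}<x<c^{1/k}\}$ (for $k$ even), where $\operatorname{div}<0$ automatically, and then rules out a closed orbit crossing $\partial B_i$ by directly analysing the vector field on the boundary lines $x=\pm c^{1/k}$ together with the $x$-axis --- no appeal to saddle separatrices is needed. Your plan to confine the orbit to $\{x<1\}$ or $\{|x|<1\}$ via the invariant manifolds of $E_1$ (and $E_2$) would also work in principle, but it is heavier machinery than the paper uses: controlling separatrices is harder than reading off the sign pattern of $(\dot x,\dot y)$ on a pair of vertical lines and on $\{y=0\}$. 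In short, the strategy is right; just replace the separatrix barrier by the simpler vertical-line barrier at $x=\pm c^{1/k}$.
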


\begin{proof}
The divergence of the vector fields
\[
F(x, y) = \big( y, \left( x^k - c \right) y + x \left( x^n -1 \right) \big)
\]
 that define systems \eqref{maineq} is given by
\[
\mbox{div}(F(x,y)) = x^k - c, \quad c > 0.
\]
Define the sets
\[
B_1 = \left\{ (x, y) \in \mathbb R^2 : x < c^{1/k} \right\}, \quad B_2 = \left\{ (x, y) \in \mathbb R^2 : -c^{1/k} < x < c^{1/k} \right\}.
\]
It follows that: If $k$ is odd, then $\mbox{div}(F(x,y)) < 0$ for $(x, y) \in B_1$; if $k$ is even, then $\mbox{div}(F(x,y)) < 0$ for $(x, y) \in B_2$. Note that $E_0 = (0,0) \in B_i$, $i = 1, 2$. By the Bendixson's Theorem \cite[Theorem 7.10]{DLA}, systems \eqref{maineq} do not have a closed orbit which lies entirely in $B_i$, $i = 1, 2$. So, if there exists a closed orbit of systems \eqref{maineq} it must cross the boundary of $B_i$, $i = 1, 2$. But this is not possible by analyzing the restrictions of the vector fields $F$ on the $x$-axis and on the boundary of $B_i$, $i = 1, 2$. The lemma is proved.
\end{proof}

Proving the existence or non-existence of closed orbits for systems \eqref{maineq} when $0<c<1$ is not as simple as in Lemma \ref{lemma:0}. However, numerical simulations suggest that these systems do not present closed orbits if $0<c<1$. Thus, the study of the global phase portrait of systems \eqref{maineq} for $0<c<1$ will be made without considering the possible existence of limit cycles.

Since systems \eqref{maineq} are polynomial, one way to study their global dynamics is by studying the Poincar\'e compactification. See \cite[Chapter 5]{DLA}. Firstly we study the infinite equilibrium points using the local charts $U_1$ and $U_2$. This will be done by dividing the study into two cases: $n=1$ and $n=2$.

%
%
%
%
%
%
%

\begin{lem}\label{l1}
Consider systems \eqref{maineq} with $n = 1$.
\begin{description}
\item [(a)] If $k \geq 1$ is odd, then there are four infinite equilibrium points at the boundary of the Poincar\'e disk:
\begin{description}
\item [(a.1)] Two infinite equilibrium points of type saddle-node determined by the local charts $U_1$ and $V_1$;
			
\item [(a.2)] Two infinite equilibrium points each with one hyperbolic sector and one elliptic sector determined by the local charts $U_2$ and $V_2$.
\end{description}
	
\item [(b)] If $k \geq 1$ is even, then there are four infinite equilibrium points at the boundary of the Poincar\'e disk:
\begin{description}
\item [(b.1)] Two infinite equilibrium points of type saddle-node determined by the local charts $U_1$ and $V_1$;
			
\item [(b.2)] Two infinite equilibrium points each with one parabolic sector determined by the local charts $U_2$ and $V_2$.
\end{description}
\end{description}
\end{lem}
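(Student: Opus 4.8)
The plan is to carry out the Poincaré compactification of systems \eqref{maineq} with $n=1$ in the standard local charts and analyze the resulting equilibria on the line at infinity. Recall that in the chart $U_1$ one sets $x = 1/v$, $y = u/v$, and after the time rescaling $dt = v^{k}\,d\tau$ (to clear denominators, since the highest-degree terms come from $x^k y$) one obtains a polynomial system in $(u,v)$; the points at infinity correspond to $v = 0$. First I would write down this $U_1$-system explicitly and locate its equilibria with $v = 0$: I expect a single one, at the origin of $U_1$, and it should be degenerate (the linear part will have a zero eigenvalue because the dominant balance at infinity is nonlinear). The chart $V_1$ is antipodal; since $k$ is odd in part (a) the vector field on the boundary circle has the appropriate symmetry so that $V_1$ carries a topologically identical equilibrium, while for $k$ even the sign flips in one slot — this is exactly the dichotomy reflected in (a) versus (b), and I would track that sign of $(-1)^k$ carefully through the computation. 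For the origin of $U_1$ I would then apply the classification of degenerate planar equilibria: either the standard theorem for a single zero eigenvalue (the "saddle-node / node / saddle" trichotomy via the lowest-order term of the center-manifold reduction, as in Andronov et al. or \cite[Chapter 2]{DLA}) or, if that fails, a directional blow-up. The expected outcome is a saddle-node, giving (a.1) and (b.1).

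Next I would treat the chart $U_2$, where $x = u/v$, $y = 1/v$, again rescaling time by the appropriate power of $v$; here the origin of $U_2$ represents the endpoint of the $y$-axis direction at infinity, which is not covered by $U_1$. This equilibrium I expect to be highly degenerate — nilpotent or with both eigenvalues zero — because the $\dot x = y$ equation forces a strong degeneracy in that direction. The serious work, and the main obstacle, is the desingularization of this $U_2$ equilibrium: a single blow-up will almost certainly not resolve it, so I anticipate needing a sequence of (possibly quasi-homogeneous) blow-ups, tracking the exceptional divisor and the equilibria appearing on it at each stage, until all of them are hyperbolic or semi-hyperbolic. The sectorial decomposition of the original point is then reassembled from the phase portraits on the successive blow-up charts by the standard "blowing-down" bookkeeping. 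For $k$ odd this reassembly should produce one hyperbolic sector together with one elliptic sector (hence (a.2)), whereas for $k$ even the parity change alters which separatrices survive and the point collapses to a single parabolic sector (hence (b.2)); the antipodal chart $V_2$ is handled by the same symmetry argument as before.

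To keep the argument honest I would, in the write-up, (i) record the explicit compactified systems in $U_1$ and $U_2$, (ii) state precisely the desingularization theorem being invoked for the semi-hyperbolic case, (iii) present the blow-up of the $U_2$ point as a short chain of substitutions with a figure showing the exceptional divisor and the local portraits at each step, and (iv) invoke the symmetry $(u,v)\mapsto(-u,-v)$ (respectively $(u,v)\mapsto(-u,v)$, depending on the chart) together with the parity of $k$ to pass from $U_i$ to $V_i$. The boundedness and orientation of orbits crossing between charts is routine given \cite[Chapter 5]{DLA}. The one place where care is genuinely required — and where I expect to spend most of the effort — is ensuring that the blow-up sequence for the $U_2$ equilibrium is complete, i.e. that no hidden degenerate point on the exceptional divisor has been overlooked, since an omission there would change the count of hyperbolic versus elliptic versus parabolic sectors and hence the statement of the lemma.
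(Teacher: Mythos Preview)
Your plan matches the paper's proof: Poincar\'e compactification in $U_1,U_2$ (with $V_1,V_2$ handled by the parity-of-degree symmetry), semi-hyperbolic classification \cite[Theorem 2.19]{DLA} for the $U_1$ equilibrium giving a saddle-node, and blow-up desingularization of the degenerate origin in $U_2$. Two small corrections you will discover upon execution: for $k=1$ the $U_1$ equilibrium sits at $(-1,0)$ rather than the origin, and the $U_2$ point for $k=1$ is nilpotent and is dispatched directly by \cite[Theorem 3.5]{DLA}; for $k>1$ a \emph{single} quasi-homogeneous polar blow-up with weights $(k,k+1)$, namely $(u,v)=(r^k\cos\theta,\,r^{k+1}\sin\theta)$, already produces only hyperbolic equilibria on the exceptional circle, so no chain is needed.
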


\begin{proof}
Since the degree of systems \eqref{maineq} is $k+1$, with $k\geq1$, the stability of the infinite equilibrium points on the local charts $V_1$ and $V_2$ follows directly (resp. opposite) from the study on the local charts $U_1$ and $U_2$ if $k$ is even (resp. odd).

\medskip

\noindent In the local chart $U_1$, systems \eqref{maineq} can be expressed as
\begin{equation}\label{U1c1}
\dot{u} = u + v^{(k-1)} - v^k - c u v^k - u^2 v^k, \quad \dot{v} = -u v^{(k+1)}.
\end{equation}
Setting $v = 0$, if $k=1$ then the only infinite equilibrium point covered by this chart is $I_1=(-1,0)$. Now, if  $k>1$ then the only infinite equilibrium point covered by this chart is ${\tilde I}_1=(0,0)$. The Jacobian matrices associated to systems \eqref{U1c1} at the equilibrium points $I_1$ and ${\tilde I}_1$ are expressed as
\begin{eqnarray*}
	J(-1,0)=\left(\begin{array}{cc}
		1 & c-2 \\
		0 & 0
	\end{array}\right)\text{\quad and \quad} J(0,0)=\left(\begin{array}{cc}
	1 & 0 \\
	0 & 0
	\end{array}\right).
\end{eqnarray*}

\noindent {\bf Case $k=1$.} The point $I_1$ is semi-hyperbolic and require a more detailed analysis. Semi-hyperbolic equilibrium points can be classified using classical theorems from the literature, for instance \cite[Theorem 2.19]{DLA}. After a translation and a linear change of variables, the semi-hyperbolic equilibrium point can be studied as the origin of systems of the form
\begin{equation*}\label{eq:semihyp}
	\dot{u}=A_1(u,v),\quad \dot{v}=v+B_1(u,v),
\end{equation*}
with $A_1$ and $B_1$ having neither constant nor linear terms in its power series expansion. Considering the function $v=f_1(u)$ as the solution of $\dot{v}=0$. If $c\ne1$ and $c\ne2$, then $f_1(u)\ne0$. If $c=1$ or $c=2$, then $f_1(u)=0$. In both cases, $A_1(u,f_1(u))=u^2+O(u^3)$. By \cite[Theorem 2.19]{DLA}, the equilibrium point $I_1$ is a saddle-node (see Figure \ref{fign=k=1} $(a)$). The same occurs with the infinite equilibrium point $I_2$ in the local chart $V_1$, with the opposite stability.

\medskip

\noindent {\bf Case $k>1$.} For this case, the stability analysis of the equilibrium point ${\tilde I}_1$ is similar to the previous case, and the details will be omitted. For this case, it is worth highlighting that $A_1(u,f_1(u))=u^{2k}+O(u^{2k+1})$, that is, by \cite[Theorem 2.19]{DLA}, the equilibrium point ${\tilde I}_1$ is a saddle-node (see Figure \ref{fign=k=1} $(a)$). The same occurs with the infinite equilibrium point $I_2$ in the local chart $V_1$, with the same stability if $k>1$ is even and opposite stability if $k>1$ is odd. Statements (a.1) and (b.1) are proved.


\noindent In the local chart $U_2$, systems \eqref{maineq} write as
\begin{equation}\label{U2c1}
	\dot{u} = -u^3 v^{k-1} + v^k + c u v^k + u^2 v^k - u^{k + 1}, \quad \dot{v} =-u^2 v^{k} + c v^{k+1} + u v^{k+1} - u^k v.
\end{equation}
The only equilibrium point which is not covered by the local chart $U_1$
is $I_3 = (0, 0)$. The Jacobian matrices associated to systems \eqref{U2c1} at the equilibrium point $I_3$ are expressed as
\begin{eqnarray*}
	J(0,0)=\left(\begin{array}{cc}
		0 & 1 \\
		0 & 0
	\end{array}\right),\text{\quad if $k=1$,\quad or\quad} J(0,0)=\left(\begin{array}{cc}
	0 & 0 \\
	0 & 0
	\end{array}\right),\text{\quad if $k>1$}.
\end{eqnarray*}

\noindent {\bf Case $k=1$.} In this case, the point $I_3$ is nilpotent. Nilpotent equilibrium points can be classified using theorems from the literature, see \cite[Theorem 3.5]{DLA}. Considering the functions $v=f_0(u)$ as the solution of $\dot{u}=0$, we obtain $B_0(u,f_0(u))=-u^3+O(u^4)$ and $(\partial A_0/\partial u+\partial A_0/\partial v)(u,f_0(u))=-3 u+O(u^2)$. By \cite[Theorem 3.5]{DLA}, there are one hyperbolic and one elliptic sector in a neighborhood of the equilibrium point $I_3$ (see Figure \ref{fign=k=1} $(b)$). The same occurs with the infinite equilibrium point $I_4$ in the local chart $V_2$, with the opposite stability.

\begin{figure}[h]
	\begin{center}
		\begin{overpic}[width=4.5in]{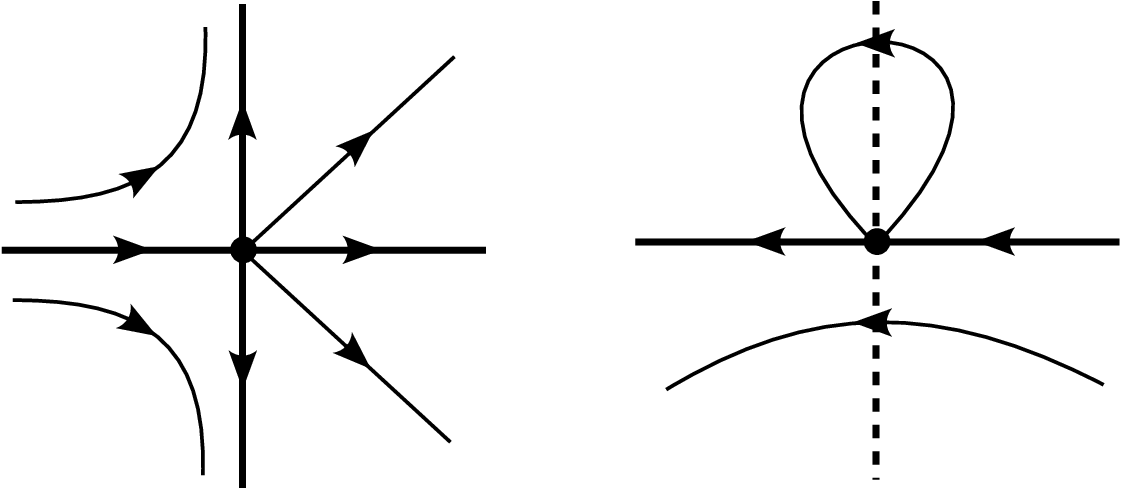}
			\put(20,-4) {$(a)$}
			\put(76,-4) {$(b)$}
		\end{overpic}
	\end{center}
	\caption{$(a)$ Topological local phase portraits at $I_1$ and ${\tilde I}_1$ of systems \eqref{U1c1}. $(b)$ Topological local phase portraits at $I_3$ of systems \eqref{U2c1} if $k=1$.}
	\label{fign=k=1}
\end{figure}

\medskip

\noindent {\bf Case $k>1$.} This case requires more attention since $I_3$ is a degenerate equilibrium point. We use the quasi-homogeneous polar blow-ups to describe the local dynamics at this point. See \cite{AFJ} and \cite{DLA}. Consider the blow-up given by $(u, v)=(r^k \cos(\theta), r^{k+1} \sin(\theta))$, with $r \geq 0$ and $0 \leq \theta <2\pi$. After eliminating the common factor $(k\cos^2(\theta)+(k+1)\sin^2(\theta))/r^{k^2}$, systems \eqref{U2c1} have the form
\begin{equation}\label{buc1}
	\begin{split}
		&\dot{r}=
		r \left( \cos(\theta) \sin(\theta)^k - \cos(\theta)^k \right) - r^{2k} \cos(\theta)^2 \sin(\theta)^{k-1} + c\, r^{k+1} \sin(\theta)^k \\
		&\quad\,\,+ r^{2k+1} \cos(\theta) \sin(\theta)^k,
		\\[0.25cm]
		&\dot{\theta}= \sin(\theta) \left( \cos(\theta)^{k+1} - (1+k) \sin(\theta)^k \right) + r^{2k-1} \cos(\theta)^3 \sin(\theta)^k - c r^k \cos(\theta) \sin(\theta)^{k+1} \\
		&\quad\,\,- r^{2k} \cos(\theta)^2 \sin(\theta)^{k+1}.
	\end{split}
\end{equation}
The equilibrium point $I_3$ is blown up into the circle $r = 0$. The equilibrium points
belonging to this circle  are
\begin{gather*}
	\theta_0=0,\quad\theta_1\in \left(0,\dfrac{\pi}{2}\right),\quad\theta_2=\pi,\quad \theta_3\in
	\begin{cases}
		\left(\dfrac{\pi}{2},\pi\right)\,\,\text{if $k$ is odd}\vspace{0.2cm},\\
		\left(\dfrac{3\pi}{2},2\pi\right)\,\,\text{if $k$ is even,}
	\end{cases}
\end{gather*}
which are solutions of the equation
\begin{equation}\label{eqc1}
	\sin(\theta) \left(\cos(\theta)^{k+1} - (1 + k) \sin(\theta)^k\right)=0.
\end{equation}
The Jacobian matrix associated to systems \eqref{buc1} at the equilibrium points $(0, \theta_i)$, for $i=0,1,2,3$, is expressed as
\begin{eqnarray*}
	J(0,\theta_i)=\left(\begin{array}{cc}
		\cos(\theta_i)\sin^k(\theta_i)-\cos^k(\theta_i) & 0 \\
		0 & \alpha
	\end{array}\right),
\end{eqnarray*}
where $\alpha=\cos^{k+2}(\theta_i)-(k+1)\cos^k(\theta_i)\sin^2(\theta_i)-(k+1)^2\cos(\theta_i)\sin^k(\theta_i)$. It follows that the Jacobian matrices at the equilibrium points $(0, \theta_0)$ and $(0, \theta_2)$ can be written as
\begin{gather*}
	J(0,\theta_0)=\left(\begin{array}{cc}
		-1 & 0 \\
		0 & 1
	\end{array}\right)\text{\quad and \quad}
	J(0,\theta_2)=\left(\begin{array}{cc}
		(-1)^{k+1} & 0 \\
		0 & (-1)^{k}
	\end{array}\right).
\end{gather*}
The equilibrium point $(0,\theta_0)$ is a hyperbolic saddle attracting in the radial direction and repelling in the polar direction. The equilibrium point $(0,\theta_2)$ is a hyperbolic saddle attracting (resp. repelling) in the radial direction and repelling (resp. attracting) in the polar direction if $k$ is even (resp. odd).

\noindent In order to study the stability of the equilibrium points  $(0,\theta_1)$ and  $(0,\theta_3)$ note that $\sin^k(\theta)=\cos^{k+1}(\theta)/(k+1)$, see \eqref{eqc1}. It follows that the Jacobian matrix associated to systems \eqref{buc1} at the equilibrium points $(0, \theta_i)$, for $i=1,3$, can be expressed as
\begin{eqnarray*}
	J(0,\theta_i)=\left(\begin{array}{cc}
		\dfrac{\cos^k(\theta_i)\,\alpha_i}{k+1} & 0 \vspace{0.2cm}\\
		0 & \cos^k(\theta_i)\,\alpha_i
	\end{array}\right),
\end{eqnarray*}
where $\alpha_i=\cos^2(\theta_i)-(k+1)<0$. Thus, the equilibrium point $(0,\theta_1)$ is a hyperbolic stable node or focus for $\theta_1\in(0,\pi/2)$ and the equilibrium point $(0,\theta_3)$ is a hyperbolic stable (resp. unstable) node or focus for $\theta_3\in(3\pi/2,2\pi)$ (resp. $\theta_3\in(\pi/2,\pi)$).

\medskip

\noindent A summary of the study of the dynamics at $r=0$ is illustrated in Figures \ref{fign1k>2} $(a)$ and $(d)$. Going back through the changes of variables, the dynamics close to the circle $r=0$ allow us to classify the infinite equilibrium point $I_3$ of systems \eqref{U2c1}. It follows that if $k$ is odd then the equilibrium point has one elliptic sector and one hyperbolic sector (see Figure \ref{fign1k>2} $(b)$). We will denote this point by $I_3^{0}$. The same occurs with the infinite equilibrium point $I_4^{0}$ in the local chart $V_2$, with the opposite stability. Statement (a.2) is proved. Now, if $k$ is even then the equilibrium point has one parabolic sector (see Figure \ref{fign1k>2} $(e)$). We will denote this point by $I_3^{e}$. The same occurs with the infinite equilibrium point $I_4^{e}$ in the local chart $V_2$, with the same stability. Statement (b.2) is proved. The local behaviors of the infinite points studied in this subsection are depicted in Figures \ref{fign1k>2} $(c)$ and $(f)$.
\end{proof}

\begin{figure}[h!]
	\begin{center}
		\begin{overpic}[width=5.5in]{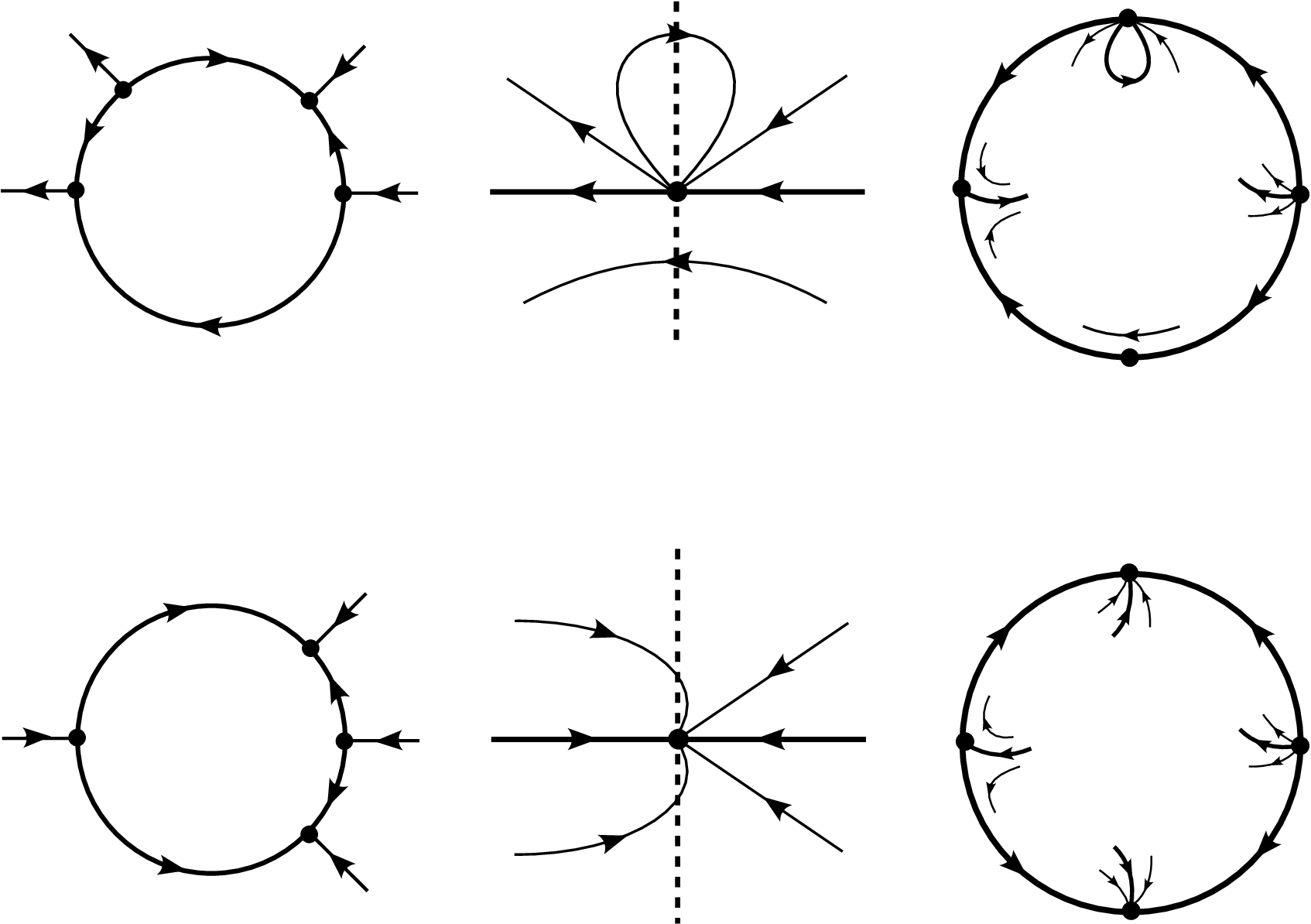}
			\put(15,35) {$(a)$}
			\put(50,35) {$(b)$}
			\put(85,35) {$(c)$}
			\put(15,-7) {$(d)$}
			\put(50,-7) {$(e)$}
			\put(85,-7) {$(f)$}
			\put(100,55) {$I_1$}
			\put(70,55) {$I_2$}
			\put(85,71) {$I_3^0$}
			\put(85,39.5) {$I_4^0$}
			\put(100,12) {$I_1$}
			\put(70,13) {$I_2$}
			\put(85,29) {$I_3^e$}
			\put(85,-3) {$I_4^e$}
			\put(32,55) {$\theta_0$}
			\put(-2.5,55) {$\theta_2$}
			\put(28,67) {$\theta_1$}
			\put(2.5,68) {$\theta_3$}
			\put(32,13) {$\theta_0$}
			\put(-2.5,13) {$\theta_2$}
			\put(28,25) {$\theta_1$}
			\put(28,1) {$\theta_3$}
			\put(65,54) {$u$}
			\put(52,70) {$v$}
			\put(65,12) {$u$}
			\put(52,28) {$v$}
		\end{overpic}
	\end{center}
\vspace{0.5cm}
\caption{$(a)$ Desingularization of systems \eqref{U2c1} with $k>1$ odd using polar blow-ups. $(b)$ Topological local phase portraits at the origin of systems \eqref{U2c1} with $k>1$ odd. $(c)$ Dynamics close to the infinity of systems \eqref{maineq} with $n=1$ and $k\geq1$ odd in the Poincar\'e disk. $(d)$ Desingularization of systems \eqref{U2c1} using polar blow-ups whit $k>1$ even. $(e)$ Topological local phase portraits at the origin of systems \eqref{U2c1} with $k>1$ even. $(f)$ Dynamics close to the infinity of systems \eqref{maineq} with $n=1$ and $k\geq1$ even in the Poincar\'e disk.}
\label{fign1k>2}
\end{figure}

\begin{lem}\label{l2}
Consider systems \eqref{maineq} with $n = 2$.
\begin{description}
\item [(a)] If $k = 1$, then there are two infinite equilibrium points at the boundary of the Poincar\'e disk each with two elliptic sectors and two parabolic sectors determined by the local charts $U_2$ and $V_2$.
		
\item [(b)] If $k > 1$ is even, then there are four infinite equilibrium points at the boundary of the Poincar\'e disk each with one parabolic sector determined by the local charts $U_1$, $V_1$, $U_2$ and $V_2$;
		
\item [(c)] If $k > 1$ is odd, then there are four infinite equilibrium points at the boundary of the Poincar\'e disk:
\begin{description}
\item [(c.1)] Two infinite equilibrium points each with one parabolic sector determined by the local charts $U_1$ and $V_1$;
			
\item [(c.2)] Two infinite equilibrium points each with one elliptic sector and one hyperbolic sector determined by the local charts $U_2$ and $V_2$.
\end{description}
\end{description}
\end{lem}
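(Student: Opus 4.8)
The proof follows the same scheme as that of Lemma~\ref{l1}. We compute the Poincar\'e compactification of \eqref{maineq} with $n=2$ in the charts $U_1$ and $U_2$, locate and classify the equilibria on the line at infinity $v=0$, and transfer the conclusions to $V_1$ and $V_2$ by the antipodal symmetry. Here the degree of \eqref{maineq} is $d=\max\{k+1,3\}$, so $d=3$ when $k\in\{1,2\}$ and $d=k+1$ when $k\geq3$; in particular $d$ is odd exactly when $k=1$ or $k$ is even, and then the infinite equilibria in $V_1,V_2$ have the same stability as the corresponding ones in $U_1,U_2$, while they have the opposite stability when $k\geq3$ is odd (the topological type of the sectors, which is all the statement records, is of course unchanged). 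A fast way to locate the infinite equilibria is to dehomogenize $xQ_d-yP_d$, where $(P_d,Q_d)$ is the degree-$d$ homogeneous part of \eqref{maineq}: this polynomial equals $x^4$ if $k=1$, $x^3(x+y)$ if $k=2$ and $x^{k+1}y$ if $k\geq3$. Thus the direction of the $y$-axis is always an infinite equilibrium (seen in $U_2$ and $V_2$); when $k=2$ there is in addition the direction $y=-x$, and when $k\geq3$ the direction of the $x$-axis (seen in $U_1$ and $V_1$). This already gives the counts in (a), (b) and (c).

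We first treat the chart $U_1$. For $k=1$ the restriction of $\dot u$ to $v=0$ is a nonzero constant, so there is no infinite equilibrium in $U_1$, as claimed in (a). For $k\geq2$ there is a unique equilibrium, at $(-1,0)$ if $k=2$ and at $(0,0)$ if $k\geq3$, with Jacobian eigenvalues $1$ and $0$; hence it is semi-hyperbolic. Arguing as in the proof of Lemma~\ref{l1} — a translation and a linear change of variables bringing the system to the form $\dot u=A(u,v)$, $\dot v=v+B(u,v)$, followed by the computation of the center-manifold graph $v=f(u)$ — one finds $A(u,f(u))=u^{2k-1}+O(u^{2k})$. Since $2k-1$ is odd with positive leading coefficient, \cite[Theorem~2.19]{DLA} gives that the point is an unstable node, which on the Poincar\'e disk shows up as a single parabolic sector; by the symmetry above the same holds in $V_1$. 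This proves the parts of (b) and (c.1) concerning $U_1$ and $V_1$. (By contrast, in the case $n=1$ the analogous exponent is $2k$, which is why the corresponding points are saddle-nodes in Lemma~\ref{l1}.)

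The substantive part is the chart $U_2$, where $v=0$ always contains the origin as an equilibrium with identically zero Jacobian; when $k=2$ it also contains $(-1,0)$, which is antipodal to the $U_1$ equilibrium and hence already classified. To desingularize the origin we use a quasi-homogeneous blow-up $(u,v)=(r^{k}\cos\theta,r^{k+1}\sin\theta)$ adapted to the Newton diagram of the local vector field. For $k\geq2$ the quasi-homogeneous principal part is $(v^{k}-u^{k+1},\,-u^{k}v)$, which coincides with the principal part of the $U_2$ system in the case $n=1$; hence the blow-up, the equilibria on the exceptional divisor $r=0$ (the solutions of an equation of the same form as \eqref{eqc1}), their diagonal Jacobians and the sign discussion are exactly as in the proof of Lemma~\ref{l1}, and the blown-down point has one parabolic sector if $k$ is even and one elliptic and one hyperbolic sector if $k\geq3$ is odd — this is (b) and (c.2). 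For $k=1$ the principal part is instead $(v^{2}-u^{2}v-u^{4},\,-uv^{2}-u^{3}v)$ (weights $(1,2)$); after cancelling the common factor, the divisor $r=0$ carries a handful of hyperbolic equilibria, and blowing down produces the configuration with two elliptic and two parabolic sectors of (a). Assembling the $U_1$ and $U_2$ descriptions and using the antipodal symmetry yields the full statement.

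The main obstacle is the blow-up analysis of the degenerate origin in $U_2$, and above all the case $k=1$, which is genuinely new with respect to Lemma~\ref{l1}: there the linear part vanishes identically, the correct blow-up is non-standard, and after desingularization one must check that every equilibrium on $r=0$ is elementary (so that a single blow-up suffices — I expect this to hold, as the relevant polynomial in $\sin\theta$ has no common root with its derivative in $[-1,1]$) and then track carefully how the hyperbolic sectors on $r=0$ glue together after blowing down, in order to recover exactly the ``two elliptic sectors and two parabolic sectors'' configuration. The remaining cases are bookkeeping-heavy but conceptually identical to the corresponding steps in the proof of Lemma~\ref{l1}.
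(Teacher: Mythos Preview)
Your approach is the same as the paper's: Poincar\'e compactification, Theorem~2.19 for the semi-hyperbolic points in $U_1$, and quasi-homogeneous polar blow-up for the degenerate origin in $U_2$. Two of your observations are genuine economies that the paper does not make: the quick count of infinite directions via $xQ_d-yP_d$, and the fact that for $k\geq2$ the quasi-homogeneous principal part in $U_2$ is literally $(v^{k}-u^{k+1},\,-u^{k}v)$, identical to the $n=1$ case, so that the entire blow-up analysis of Lemma~\ref{l1} transfers verbatim (the paper instead repeats the computation). Your exponent $2k-1$ in $U_1$ is correct and sharper than what the paper writes, which simply asserts ``unstable node''.

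The only place you hedge is the $k=1$ blow-up in $U_2$, and your expectation is right. The paper carries it out explicitly: on $r=0$ one finds six equilibria, namely $\theta=0,\pi$ (hyperbolic saddles) and four angles with $\sin\theta=(\sqrt5-1)/2$ or $\sin\theta=1-\sqrt2$ (hyperbolic nodes or foci, two stable and two unstable). All Jacobians are computed and nonsingular, so a single blow-up suffices, and blowing down yields exactly the two elliptic plus two parabolic sectors. To turn your sketch into a proof you only need to record those six equilibria, check the signs of the two diagonal eigenvalues at each, and draw the blow-down picture.
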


\begin{proof}
Since systems \eqref{maineq} are cubic if $k\in\{1,2\}$ and has degree $k+1$ if $k\geq3$, the stability of the infinite equilibrium points on the local charts $V_1$ and $V_2$ follows directly (resp. opposite) from the study on the local charts $U_1$ and $U_2$ if  $k\in\{1,2\}$ or $k\geq3$ is even (resp. $k\geq3$ is odd).

\medskip

\noindent To simplify the study of the stability of the equilibrium points in the local chart $U_1$, if they exist, we will divide it into two cases: $k = 1$ and $k > 1$.

\medskip

\noindent {\bf Case $k=1$.} In the local chart $U_1$, systems \eqref{maineq} with $k=1$ can be written as
\begin{equation}\label{U1n2c1}
	\dot{u} = 1 + v (u - (1 + u (c + u)) v), \quad \dot{v} = -u v^3.
\end{equation}
Taking $v = 0$, it follows that systems \eqref{U1n2c1} has no equilibrium points.

\medskip

\noindent {\bf Case $k>1$.} In the local chart $U_1$, systems \eqref{maineq} with $k>1$ can be expressed as
\begin{equation}\label{U1n2c2}
	\dot{u} = u + v^{(k-2)} - v^k - c u v^k - u^2 v^k, \quad \dot{v} = -u v^{(k+1)}.
\end{equation}
Consider $v = 0$. If $k=2$ then the only infinite equilibrium point covered by this chart is $I_5=(-1,0)$. If $k>2$ then the only infinite equilibrium point covered by this chart is ${\tilde I}_5=(0,0)$. The Jacobian matrix associated to systems \eqref{U1n2c2} at the equilibrium points $I_5$ and ${\tilde I}_5$ is given by
\begin{eqnarray*}
	J(0,0)=J(-1,0)=\left(\begin{array}{cc}
		1 & \delta \\
		0 & 0
	\end{array}\right),
\end{eqnarray*}
where $\delta=1$ if $k=3$, and $\delta=0$ otherwise. The points $I_5$ and ${\tilde I}_5$ are semi-hyperbolic. Using the same techniques presented in the previous cases, it is possible to show that both points are unstable nodes. The same occurs with the infinite equilibrium point $I_6$ in the local chart $V_1$, with the same stability if $k>1$ is even and the opposite stability if $k>1$ is odd. Statement (c.1) is proved.

\medskip

\noindent Once again, to simplify the study in the local chart $U_2$, we will divide it into two cases: $k = 1$ and $k > 1$.

\medskip

\noindent {\bf Case $k=1$.} In the local chart $U_2$, systems \eqref{maineq} with $k=1$ have the form
\begin{equation}\label{U2c2}
	\dot{u} = -u^4 + u^2 (v-1) v + v^2 (1+c u), \quad \dot{v} = v (-u^3 + u (v-1) v + c v^2).
\end{equation}
Since that $I_7=(0,0)$ is a degenerate equilibrium point, we use the quasi-homogeneous polar blow-ups to describe the local dynamics at this point. See \cite{AFJ} and \cite{DLA}. Consider the blow-up given by $(u, v)=(r \cos(\theta), r^2 \sin(\theta))$, with $r \geq 0$ and $0 \leq \theta <2\pi$. Applying this blow-up and eliminating the common factor $(1+\sin^2(\theta))/r^3$, systems \eqref{U2c2} are written as
\begin{equation}\label{buc2}
	\begin{split}
		&\dot{r}=
		\frac{r}{4} \left( 2\, c\, r + (-2 + r^2) \cos(\theta) - 2\, c\, r \cos(2\theta)
		- (2 + r^2) \cos(3\theta) - 2 \sin(2\theta) \right),
		\\[0.25cm]
		&\dot{\theta}= \sin(\theta) \left( \cos^4(\theta) + \cos^2(\theta) \sin(\theta) - 2 \sin^2(\theta) \right)-r \cos(\theta) \left( c + r \cos(\theta) \right) \sin^3(\theta).
	\end{split}
\end{equation}
The equilibrium point $I_7$ is blown up into the circle $r = 0$. The equilibrium points belonging to this circle  are
\begin{gather*}
\theta_0^1=0,\quad\theta_1^1=\arcsin{\left(\frac{\sqrt{5}-1}{2}\right)},\quad\theta_2^1=\pi-\arcsin{\left(\frac{\sqrt{5}-1}{2}\right)},\quad \theta_3^1=\pi,\\
	\hspace{-1cm}\theta_4^1=\pi-\arcsin{\left(1-\sqrt{2}\right)}\quad\text{and}\quad
	\theta_5^1=2\pi+\arcsin{\left(1-\sqrt{2}\right)},
\end{gather*}
which are solutions of the equation
\begin{equation*}
	\sin(\theta) \left( \cos^4(\theta) + \cos^2(\theta) \sin(\theta) - 2 \sin^2(\theta) \right)=0.
\end{equation*}
The Jacobian matrices associated to systems \eqref{buc2} at these equilibrium points are the following
\begin{gather*}
	J(0,\theta_0^1)=-J(0,\theta_3^1)=\left(\begin{array}{cc}
		-1 & 0 \\
		0 & 1
	\end{array}\right), \\[0.25cm]
	J(0,\theta_1^1)=-J(0,\theta_2^1)=\left(\begin{array}{cc}
		\frac{1}{2} \left( 5 - 3 \sqrt{5} \right) \sqrt{\frac{1}{2} \left( -1 + \sqrt{5} \right)} & 0 \vspace{0.2cm}\\
		-\dfrac{\left( -1 + \sqrt{5} \right)^{\frac{7}{2}} c}{8 \sqrt{2}} & -\dfrac{3}{2} \sqrt{\frac{1}{2} \left( -1 + \sqrt{5} \right)} \left( -5 + 3 \sqrt{5} \right)
	\end{array}\right),\\[0.25cm]
	J(0,\theta_4^1)=-J(0,\theta_5^1)=\left(\begin{array}{cc}
		\dfrac{2}{\sqrt{41 + 29 \sqrt{2}}} & 0 \vspace{0.2cm}\\
		-\sqrt{2} \left( -1 + \sqrt{2} \right)^{\frac{7}{2}} c & \dfrac{12}{\sqrt{41 + 29 \sqrt{2}}}
	\end{array}\right).
\end{gather*}
It follows that $(0,\theta_0^1)$ and $(0,\theta_3^1)$ are hyperbolic saddles. More precisely, $(0,\theta_0^1)$ (resp. $(0,\theta_3^1)$) is attracting (resp. repelling) in the radial direction ($r$-direction) and repelling (resp. attracting) in the angular direction ($\theta$-direction). Moreover, $(0,\theta_1^1)$ and $(0,\theta_5^1)$ are hyperbolic stable node or focus,  $(0,\theta_2^1)$ and $(0,\theta_4^1)$ are hyperbolic unstable node or focus.

\medskip

\noindent A summary of the study of the dynamics at $r=0$ is illustrated in Figure \ref{fign=2k=1} $(a)$. Going back through the changes of variables, the dynamics close to the circle $r=0$ allow us to classify the infinite equilibrium point $I_7$ of systems \eqref{U2c2}. See Figure \ref{fign=2k=1} $(b)$. It follows that the equilibrium point has two elliptic sectors and two parabolic sectors.  The same stability occurs with the infinite equilibrium point $I_8$ in the local chart $V_2$. Statement (a) is proved. The local behaviors of the infinite equilibrium points of systems \eqref{maineq} with $n=2$ and $k=1$ are depicted in Figure \ref{fign=2k=1} $(c)$.

\begin{figure}[h!]
	\begin{center}
		\begin{overpic}[width=5.5in]{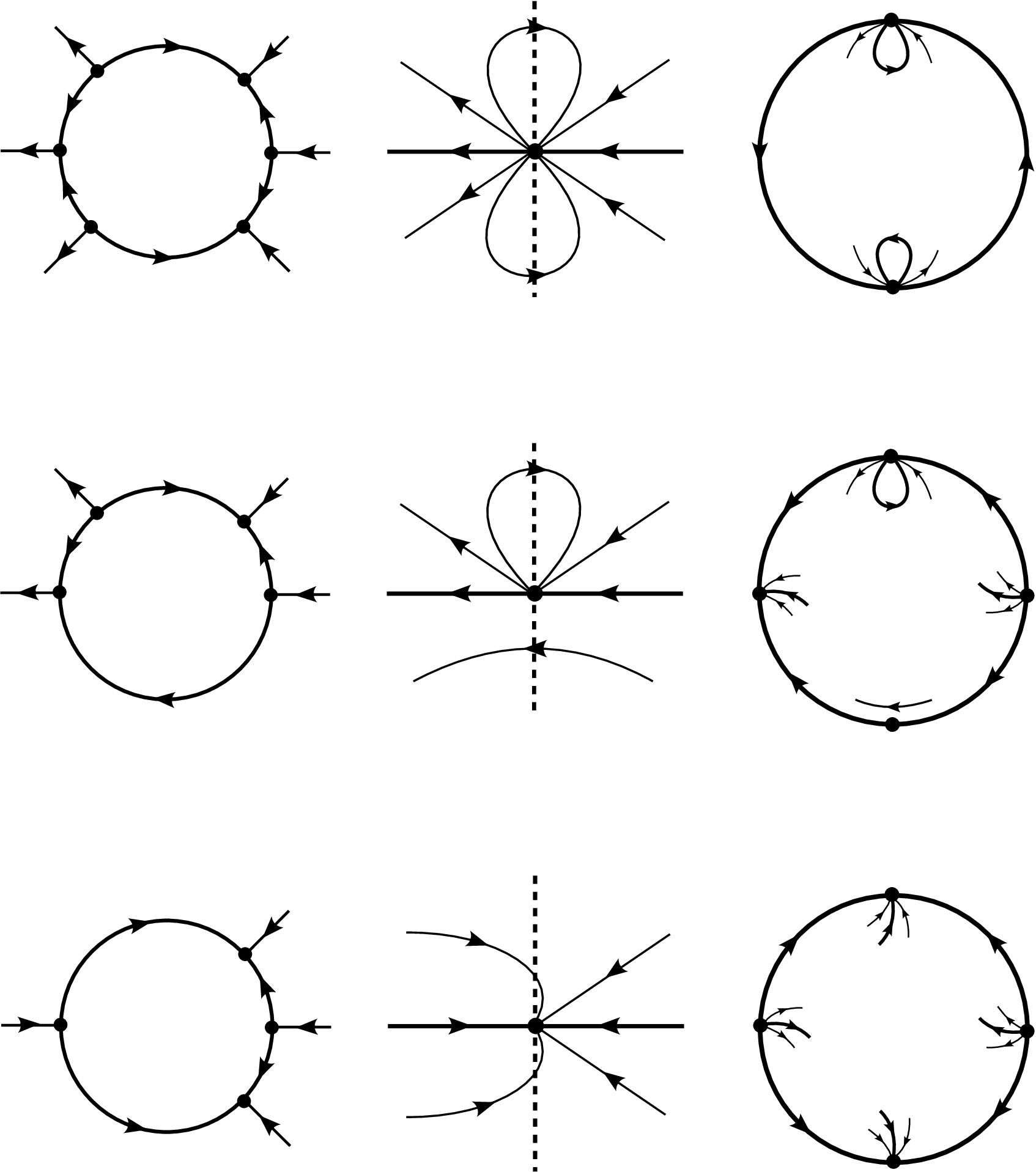}
						\put(12,69) {$(a)$}
						\put(44,69) {$(b)$}
						\put(75,69) {$(c)$}
						\put(12,31) {$(d)$}
						\put(44,31) {$(e)$}
						\put(75,31) {$(f)$}
						\put(12,-6) {$(g)$}
						\put(44,-6) {$(h)$}
						\put(75,-6) {$(i)$}
						\put(89,48) {$I_5$}
						\put(61.5,48) {$I_6$}
						\put(89,11) {$I_5$}
						\put(61.5,12) {$I_6$}
						\put(75,100) {$I_7$}
						\put(75,72.5) {$I_8$}
						\put(75,63) {$I_9^0$}
						\put(75,35) {$I_{10}^0$}
						\put(75,25.5) {$I_9^e$}
						\put(75,-2) {$I_{10}^e$}
						\put(28.5,86) {$\theta_0^1$}
						\put(25,96) {$\theta_1^1$}
						\put(2,97) {$\theta_2^1$}
						\put(-2,86) {$\theta_3^1$}
						\put(1.5,75) {$\theta_4^1$}
						\put(25,75) {$\theta_5^1$}
						\put(28.5,48) {$\theta_0^2$}
						\put(25,59) {$\theta_1^2$}
						\put(-2.5,49) {$\theta_2^2$}
						\put(2,59) {$\theta_3^2$}
						\put(28.5,11) {$\theta_0^2$}
						\put(25,22) {$\theta_1^2$}
						\put(-2,12) {$\theta_2^2$}
						\put(25,1) {$\theta_3^2$}
						\put(57,85) {$u$}
						\put(46,99) {$v$}
						\put(57,47) {$u$}
						\put(46,61) {$v$}
						\put(57,10.5) {$u$}
						\put(46,24) {$v$}
		\end{overpic}
	\end{center}\vspace{0.5cm}
\caption{$(a)$ Desingularization of systems \eqref{U2c2} using polar blow-ups. $(b)$ Topological local phase portraits at the origin of systems \eqref{U2c2}. $(c)$ Dynamics close to the infinity of systems \eqref{maineq} with $n=2$ and $k=1$ in the Poincar\'e disk. $(d)$ Desingularization of systems \eqref{U2c3} with $k>1$ odd using polar blow-ups. $(e)$ Topological local phase portraits at the origin of systems \eqref{U2c3} with $k>1$ odd. $(f)$ Dynamics close to the infinity of systems \eqref{maineq} with $n=2$ and $k>1$ odd in the Poincar\'e disk. $(g)$ Desingularization of systems \eqref{U2c3} with $k>1$ even using polar blow-ups. $(h)$ Topological local phase portraits at the origin of systems \eqref{U2c3} with $k>1$ even. $(i)$ Dynamics close to the infinity of systems \eqref{maineq} with $n=2$ and $k>1$ even in the Poincar\'e disk.}
\label{fign=2k=1}
\end{figure}

\medskip

\noindent {\bf Case $k>1$.} In the local chart $U_2$, systems \eqref{maineq} with $k>1$ are written as
\begin{equation}\label{U2c3}
	\dot{u} = -u^4 v^{k-2} + v^k + c u v^k + u^2 v^k - u^{k + 1}, \quad \dot{v} =-u^3 v^{k-1} + c v^{k+1} + u v^{k+1} - u^k v.
\end{equation}
Since that $I_9=(0,0)$ is a degenerate equilibrium point, we use the quasi-homogeneous polar blow-ups to describe the local dynamics at this point. The study of the stability of the equilibrium point  $I_9$ is similar to the case $n=1$ and $k>1$, and the details will be omitted.

\noindent Consider the blow-up given by $(u, v)=(r^k \cos(\theta), r^{k+1} \sin(\theta))$, with $r \geq 0$ and $0 \leq \theta <2\pi$. Doing this blow-up and eliminating the common factor $(k\cos^2(\theta)+(k+1)\sin^2(\theta))/r^{k^2}$, systems \eqref{U2c3} have the form
\begin{equation}\label{buc3}
	\begin{split}
		&\dot{r}=
		r \left(\cos(\theta) \sin(\theta)^k - \cos(\theta)^k\right) + r^{2k - 1} \left(\cos(\theta) \sin(\theta)^k - \cos(\theta) \sin(\theta)^{k - 2}\right)  \\
		&\quad\,\,+ c r^{k + 1} \sin(\theta)^k + r^{2k + 1} \cos(\theta) \sin(\theta)^k,
		\\
		&\dot{\theta}= \sin(\theta) \left(\cos(\theta)^{k+1} - (1 + k) \sin(\theta)^k\right) + r^{2k - 2} \left(\cos(\theta)^2 \sin(\theta)^{k - 1} - \cos(\theta)^2 \sin(\theta)^{k + 1}\right) \\[0.25cm]
		&\quad\,\,- r^k \left(c \cos(\theta) \sin(\theta)^{k + 1} + r^k \cos(\theta)^2 \sin(\theta)^{k + 1}\right).
	\end{split}
\end{equation}
The equilibrium point $I_0 = (0, 0)$ is blown up into the circle $r = 0$. The equilibrium points belonging to this circle  are
\begin{gather*}
	\theta_0^2=0,\quad\theta_1^2\in \left(0,\dfrac{\pi}{2}\right),\quad\theta_2^2=\pi,\quad \theta_3^2\in
	\begin{cases}
		\left(\dfrac{\pi}{2},\pi\right)\,\,\text{if $k$ is odd}\vspace{0.2cm},\\
		\left(\dfrac{3\pi}{2},2\pi\right)\,\,\text{if $k$ is even,}
	\end{cases}
\end{gather*}
which are solutions of the equation
\begin{equation}\label{eqc3}
	\sin(\theta) \left(\cos(\theta)^{k+1} - (1 + k) \sin(\theta)^k\right)=0.
\end{equation}
The Jacobian matrices associated to systems \eqref{buc3} at the equilibrium points $(0, \theta_i^2)$, for $i=0,1,2,3$, are given by
\begin{eqnarray*}
	J(0,\theta_i^2)=\left(\begin{array}{cc}
		\cos(\theta_i^2)\sin^k(\theta_i^2)-\cos^k(\theta_i^2) & 0 \\
		0 & \alpha
	\end{array}\right),
\end{eqnarray*}
where $\alpha=\cos^{k+2}(\theta_i^2)-(k+1)\cos^k(\theta_i^2)\sin^2(\theta_i^2)-(k+1)^2\cos(\theta_i^2)\sin^k(\theta_i^2)$. It follows that the Jacobian matrices at the equilibrium points $(0, \theta_0^1)$ and $(0, \theta_2^2)$ are
\begin{gather*}
	J(0,\theta_0^2)=\left(\begin{array}{cc}
		-1 & 0 \\
		0 & 1
	\end{array}\right)\text{\quad and \quad}
	J(0,\theta_2^2)=\left(\begin{array}{cc}
		(-1)^{k+1} & 0 \\
		0 & (-1)^{k}
	\end{array}\right).
\end{gather*}
The equilibrium point $(0,\theta_0^2)$ is a hyperbolic saddle attracting in the radial direction and repelling in the polar direction. The equilibrium point $(0,\theta_2^2)$ is a hyperbolic saddle attracting (repelling) in the radial direction and repelling (attracting) in the polar direction if $k$ is even (odd).

\medskip

\noindent In order to study the stability of the equilibrium points  $(0,\theta_1^2)$ and  $(0,\theta_3^2)$, note that $\sin^k(\theta)=\cos^{k+1}(\theta)/(k+1)$ (see \eqref{eqc3}). It follows that the Jacobian matrices associated to systems \eqref{buc3} at the equilibrium points $(0, \theta_i^2)$, for $i=1,3$, can be expressed as
\begin{eqnarray*}
	J(0,\theta_i^2)=\left(\begin{array}{cc}
		\dfrac{\cos^k(\theta_i^2)\,\alpha_i}{k+1} & 0 \\
		0 & \cos^k(\theta_i^2)\,\alpha_i^2
	\end{array}\right),
\end{eqnarray*}
where $\alpha_i^2=\cos^2(\theta_i^2)-(k+1)<0$. Thus, the equilibrium point $(0,\theta_1^2)$ is a hyperbolic stable node or focus for $\theta_1^2\in(0,\pi/2)$ and the equilibrium point $(0,\theta_3^2)$ is a hyperbolic stable (unstable) node or focus for $\theta_3^2\in(3\pi/2,2\pi)$ ($\theta_3^2\in(\pi/2,\pi)$).

\medskip

\noindent A summary of the study of dynamics at $r=0$ is illustrated in Figures \ref{fign=2k=1} $(d)$ and $(g)$. Going back through the changes of variables, the dynamics close to the circle $r=0$ allow us to classify the infinite equilibrium point $I_9$ of systems \eqref{U2c3}. It follows that if $k$ is odd then the equilibrium  has one elliptic sector and one hyperbolic sector (see Figure \ref{fign=2k=1} $(e)$). Denote this equilibrium point by $I_9^0$. The same occurs with the infinite equilibrium point $I_{10}^0$ in the local chart $V_2$, with appositive stability. Statement (c.2) is proved. Now, if $k$ is even then the equilibrium has one parabolic sector (see Figure \ref{fign=2k=1} $(h)$). Denote this equilibrium point by $I_9^e$. The same occurs with the infinite equilibrium point $I_{10}^e$ in the local chart $V_2$ with the same stability. Statement (b) is proved. The local behaviors of the infinite equilibrium points of systems \eqref{maineq} with $n=2$ and $k>1$ are depicted in Figure \ref{fign=2k=1} $(f)$ and $(i)$.
\end{proof}

\section{Proof of Theorem \ref{thm:01}}\label{sec:3}

The proof of Theorem \ref{thm:01} is a combination of the study of the local phase portraits of finite and infinite equilibrium points, as well as the non-existence of closed orbits for $c \geq 1$. See Lemmas \ref{lemma:0}, \ref{l1} and \ref{l2}. In particular, systems \eqref{maineq} do not have limit cycles if $c \geq 1$. Combining it with the possible separatrices connections between the finite and infinite equilibrium points, we obtain the possible seven global phase portraits of systems \eqref{maineq}  depicted in Figure \ref{gfp}. We will present the explicit proofs for the first three global phase portraits in Figure \ref{gfp}, while the others phase portraits are obtained using similar arguments.

Consider systems \eqref{maineq} with $n \in \{1,2\}$. If $n=1$ there are two finite equilibrium points: $E_0=(0,0)$, which is a hyperbolic stable focus if $0<c<2$ or a hyperbolic stable node if $c\geq2$; and $E_1=(1,0)$, which is a hyperbolic saddle. If $n=2$ there are tree finite equilibrium points: $E_0$, which is a hyperbolic stable focus if $0<c<2$ or a hyperbolic stable node if $c\geq2$; $E_1$ and $E_2$ that are hyperbolic saddles. The connections between the stable and unstable separatrices of the finite equilibrium points $E_1$ and $E_2$ with the separatrices of the infinity equilibrium points on $\mathbb{S}^1$ (see Lemmas \ref{l1} and \ref{l2}), determine the boundaries of the canonical regions in the Poincar\'e disk,  that is, a maximal subset of the Poincar\'e disk where the orbits have the same qualitative behavior. It follows from the Markus–Neumann–Peixoto Theorem (see \cite{M, N, P}) that, to classify the phase portraits in the Poincar\'e disk of a planar polynomial differential system with finitely many finite and infinite separatrices, it is sufficient to describe their separatrix configuration. Thus, we have the following cases to analyze:

\medskip

\noindent {\bf Case I.}  If $n=1$, $k\geq1$ is odd and $0<c<1$, then a point $P=(x_0,y_0)$ on the stable separatrices of the equilibrium point $E_1$ has the infinite equilibrium point $I_1$ (see Figure \ref{fign1k>2} $(c)$) as its $\alpha$-limit set. Moreover, a point with $P$ with $y_0>0$ on the unstable separatrix of the equilibrium point $E_1$ has the infinite equilibrium point $I_3^0$ (see Figure \ref{fign1k>2} $(c)$) as its $\omega$-limit set. Conversely, a point $P$ with $y_0<0$ on the unstable separatrix of the equilibrium point $E_1$ has the finite equilibrium point $E_0$ as its $\omega$-limit set. Finally, a point on the unstable separatrix of the infinite equilibrium point $I_2$ has the infinite equilibrium point $I_3^0$ (see Figure \ref{fign1k>2} $(c)$) as its $\omega$-limit set. Therefore the global phase portrait of this system is the one on Figure \ref{gfp} $(i)$. This case has been analysed without taking into account the possible existence of limit cycles.

\medskip

\noindent {\bf Case II.} If $n=1$, $k\geq1$ is odd and $c=1$, the unstable separatrices of the equilibrium point $E_1$ have the same connection configurations as in the previous case. On the other hand, a point $P=(x_0,y_0)$ with $x_0>1$ on the stable separatrix of the equilibrium point $E_1$ has the infinite equilibrium point $I_1$ as its $\alpha$-limit set. Moreover, a point $P$ with $x_0<1$ on the stable separatrix of the equilibrium point $E_1$ has the infinite equilibrium point $I_2$ as its $\alpha$-limit set. Therefore the global phase portrait of this system is the one on Figure \ref{gfp} $(ii)$.

\medskip

\noindent {\bf Case III.} If $n=1$, $k\geq1$ is odd and $1<c<2$, the unstable separatrices of the equilibrium point $E_1$ have the same connection configurations as in the previous cases. A point $P=(x_0,y_0)$ with $x_0>1$ on the stable separatrix of the equilibrium point $E_1$ has the infinite equilibrium point $I_1$ as its $\alpha$-limit set. Now, a point $P$ with $x_0<1$ on the stable separatrix of the equilibrium point $E_1$ has the infinite equilibrium point $I_3^0$ as its $\alpha$-limit set. Moreover, a point on the unstable separatrix of the infinite equilibrium point $I_2$  has the finite equilibrium point $E_0$ as its $\omega$-limit set. The global phase portraits of these systems are the one on Figure \ref{gfp} $(iii-1)$. Finally, if $c\geq2$, the equilibrium point $E_0$ is an stable node, as the local phase portrait of the stable node is topologically equivalent to a stable focus, we also have the phase portrait of Figure \ref{gfp} $(iii-2)$.

\section{Proof of Theorem \ref{thm:traveling}}\label{sec:4}

Consider the partial differential equation \eqref{eq1} under the assumptions: $m = 1$, $n\in\{1,2\}$, $k \in \mathbb N$ and $z \in [0, 1]$. The proof of Theorem \ref{thm:traveling} will be complete if it is proven that there exist solutions $(x(s), y(s))$ of systems \eqref{maineq} whose $\alpha$-limit set is the equilibrium point $E_1 = (1,0)$ and whose $\omega$-limit set is the equilibrium point $E_0 = (0,0)$, still satisfying the restriction $0< x(s) < 1$, for all $s \in \mathbb R$.

Consider any given $c \geq 2$, $n\in\{1,2\}$, and $k \in \mathbb N$. As $c \geq 2$, the equilibrium point $E_0$ is a hyperbolic stable node. The eigenvalues of the Jacobian matrices of the vector fields that define systems \eqref{maineq} at $E_0$ are
\[
\lambda_1 = \dfrac{-c - \sqrt{c^2 - 4}}{2} < 0, \quad \lambda_2 = \dfrac{-c + \sqrt{c^2 - 4}}{2} < 0,
\]
with the respective eigenvectors
\[
V_1 = \left( \dfrac{-c + \sqrt{c^2 - 4}}{2}, 1 \right), \quad V_2 = \left( \dfrac{- c - \sqrt{c^2 - 4}}{2}, 1 \right).
\]
It follows that the slopes of the eigenspaces generated by $V_1$ and $V_2$ at $E_0$ are negative. Thus, the solutions that tend to $E_0$ do so with negative slopes. Remember that, according to Lemma \ref{lemma:0}, systems \eqref{maineq} do not have limit cycles.

On the other hand, as $c \geq 2$, the equilibrium point $E_1$ is a hyperbolic saddle. The eigenvalues of the Jacobian matrices of the vector fields that define systems \eqref{maineq} at $E_1$ are
\[
\mu_1 = \dfrac{1 - c - \sqrt{(c - 1)^2 + 4 n}}{2} < 0, \quad \mu_2 = \dfrac{1 - c + \sqrt{(c - 1)^2 + 4 n}}{2} > 0,
\]
with the respective eigenvectors
\[
W_1 = \left( \dfrac{c - 1 - \sqrt{(c - 1)^2 + 4 n}}{2 n}, 1 \right), \quad W_2 = \left( \dfrac{c - 1 + \sqrt{(c - 1)^2 + 4 n}}{2 n}, 1 \right).
\]
It follows that the slope of the eigenspace generated by $W_1$ at $E_1$ is negative while the slope of the eigenspace generated by $W_2$ at $E_1$ is positive. Therefore, the unstable curve of the saddle point $E_1$ has positive slope at $E_1$.

Consider an initial condition $(x_0, y_0)$, $0 < x_0 < 1$, on the unstable curve of the saddle point $E_1$ and denote by $(x(s), y(s))$ the solution of \eqref{maineq} by this initial condition. From the previous analyses, it follows that the $\alpha$-limit set of $(x(s), y(s))$ is the equilibrium point $E_1 = (1,0)$ and the $\omega$-limit set of $(x(s), y(s))$ is the equilibrium point $E_0 = (0,0)$. Furthermore, this solution $(x(s), y(s))$ satisfies the restriction $0< x(s) < 1$, for all $s \in \mathbb R$. See the red orbits in Figure \ref{gfp} for an illustration. In short, Theorem \ref{thm:traveling} is proved.

As a final observation, it is worth noting that there are other orbits $(x(s), y(s))$ of systems \eqref{maineq} that have finite or infinite equilibrium points as their $\alpha$ and $\omega$-limit sets. This is the case, for example, of the two components of the stable curves of the equilibrium point $E_1$ in all phase portraits of Figure \ref{gfp}. However, due to the restrictions in equation \eqref{eq1}, $0 < x(s) < 1$, these orbits do not give rise to traveling wave solutions of \eqref{eq1}.

\section*{Acknowledgments}

The first author is partially supported by Funda\c c\~ao de Amparo \`a Pesquisa do Estado de S\~ao Paulo (grant number 2019/07316–0), by Funda\c c\~ao de Amparo \`a Pesquisa do Estado de Minas Gerais (grant number APQ-02153-23) and by Conselho Nacional de Desenvolvimento Cient\'ifico e Tecnol\'ogico (grant number 311921/2020-5). The second author is grateful for the warm hospitality of the Universidade Federal de Itajub\'a during the development of this work.

\bigskip

\noindent {\bf Conflicts of Interest}: The authors declare that they have no conflict of interest.

\bigskip

\noindent {\bf Availability of data and material}: Not applicable.

\bigskip

\noindent {\bf Code availability}: Not applicable.

\bigskip

\noindent {\bf Authors' contributions}: Both authors contributed to the study, read and approved the final manuscript.

\bigskip

\end{document}